\newtheorem{thm}{Theorem}[section]
\newtheorem{cor}[thm]{Corollary}
\newtheorem{lem}[thm]{Lemma}
\newtheorem{prop}[thm]{Proposition}
\theoremstyle{definition}
\newtheorem{rem}[thm]{Remark}
\numberwithin{equation}{section}
\newcommand{\wt}{\widetilde}
\newcommand{\R}{\mathbb{R}}
\newcommand{\Z}{\mathbb{Z}}
\providecommand{\abs}[1]{\left\lvert#1\right\rvert}
\DeclareMathOperator{\sgn}{sgn}
\DeclareMathOperator{\Map}{Map}
\DeclareMathOperator{\Loc}{Loc}
\DeclareMathOperator{\Prop}{Prop}
\DeclareMathOperator{\id}{id}
\DeclareMathOperator{\Iso}{Iso}
\DeclareMathOperator{\rank}{rank}
\DeclareMathOperator{\I}{I}
\DeclareMathOperator{\Gl}{Gl}
\begin{document}


\baselineskip=17pt



\title[The Hopf type theorem for equivariant local maps]{The Hopf type theorem for equivariant local maps} 

\author[P. Bart{\l}omiejczyk]
{Piotr Bart{\l}omiejczyk}
\address{Faculty of Applied Physics and Mathematics,
Gda{\'n}sk University of Technology,
Gabriela Narutowicza 11/12,
80-233 Gda{\'{n}}sk, Poland}
\email{pbartlomiejczyk@mif.pg.gda.pl}

\date{\today}
\subjclass[2010]{Primary: 55P91; Secondary: 54C35}
\keywords{Hopf theorem, equivariant map, otopy, topological degree.}

\begin{abstract}
We study otopy classes of equivariant local maps and
prove the Hopf type theorem for such maps
in the case of a real finite dimensional orthogonal
representation of a compact Lie group. 
\end{abstract}

\maketitle


\section*{Introduction} 
\label{sec:intro}
The famous Hopf theorem states that
if $M$ is an $n$-dimensional connected orientable closed
manifold, $S^n$ the $n$-dimensional sphere and
$[M, S^n]$ denotes the set of homotopy classes of
continuous maps from $M$ to $S^n$,
then the Brouwer degree map $\deg\colon [M, S^n]\to\Z$
is a bijection for $n\ge1$. 

Our purpose is to prove 
a natural equivariant version of this theorem.
More precisely, in our version we replace 
the set of homotopy classes by the set of otopy classes
and we take into account the group action.
Namely, let $V$ be a real finite dimensional orthogonal
representation of a compact Lie group $G$,
$\Omega$ be an open invariant subset of $V$ and
$\mathcal{F}_G[\Omega]$ denote
the set of equivariant otopy classes of
equivariant local maps with 
open invariant domains contained in $\Omega$.
Assume that $0\not\in\Omega$ or $\dim V^{G}>0$. 
Our main result states that 
there is a natural bijection
\[
\mathcal{F}_G[\Omega]\approx
\prod_{(H)}\Biggl(\sum_{i=1}^{n(H)}\mathbb{Z}\Biggr),
\]
where the product (finite) extends over
the orbit types $(H)$ in $\Omega$ satisfying 
$\dim WH = 0$, where $WH$ is the Weyl group of $H$,
and the direct sums (finite or countably infinite)
are taken over the sets of all connected components
of the quotients $\Omega_H/WH$.

The proof of our main theorem follows from the
following two results:
the splitting result which gives
a product decomposition of  $\mathcal{F}_G[\Omega]$
with respect to the orbit types $(H)$ in $\Omega$ satisfying 
$\dim WH = 0$ (proved in \cite{B} and \cite{GI})
and the full description of each factor
of the above product decomposition (proved here).
In turn, the proof of the second above result 
is based on the classification of fiber otopy classes of 
local cross sections of a vector bundle and 
establishing (in case of free actions) 
a bijection between the set of otopy classes 
of equivariant local maps  and the set of fiber otopy classes of 
local cross sections of some vector bundle.

Our paper contains also some remarks
concerning the parametrized case.
Although the consideration of the parameters
may seem to be an artificial generalization,
but, in fact, it is a natural way to study the higher
homotopy groups of the space $\mathcal{F}_G(\Omega)$
using the formula
$
\mathcal{F}_G[\R^k\times\Omega]
\approx\pi_{k}\left(\mathcal{F}_G\left(\Omega\right)\right)
$
established in \cite{B}.

The organization of the paper is as follows.
Section~\ref{sec:prel} presents some preliminaries.
In Section~\ref{sec:cross} we introduce the space of
local cross sections of a vector bundle and state
two results concerning the classification of
fiber otopy classes of such sections. 
Section~\ref{sec:proof} contains the proof
of the second of these results.
In Section~\ref{sec:free} we give the description of 
the set of otopy classes of equivariant local maps
in case of free actions.
Finally, in Section~\ref{sec:hopf} we prove
our main result i.e.\  the Hopf type theorem for equivariant local maps.


\section{Preliminaries} 
\label{sec:prel}
The notation $A\Subset B$ means 
that $A$ is a compact subset of $B$.
For a topological space $X$,
let $\tau(X)$ denote the topology on $X$.
Recall that if $A$, $B$ are topological spaces,
then $\Map(A,B)$ denotes the set of all continuous maps
of $A$ into $B$ equipped with the usual compact-open topology
i.e.\ having as subbasis all the sets
$\Gamma(C,U)=\{\,f\in\Map(A,B)\mid f(C)\subset U\,\}$
for $C\Subset A$ and $U$ open in $B$. 

For any topological spaces $X$ and $Y$, 
let $\mathcal{M}(X,Y)$ be the set
of all continuous maps $f\colon D_f\to Y$ such that 
$D_f$ is an open subset of $X$.
Let $\mathcal{R}$ be
a family of subsets of $Y$. We define 
\[
\Loc(X,Y,\mathcal{R}):=\{\,f\in\mathcal{M}(X,Y)\mid 
f^{-1}(R)\Subset D_f \text{ for all $R\in\mathcal{R}$}\,\}.
\]
We introduce a topology in $\Loc(X,Y,\mathcal{R})$
generated by the subbasis consisting of all sets of the form
\begin{itemize}
	\item $H(C,U):=\{\,f\in\Loc(X,Y,\mathcal{R})\mid 
	      C\subset D_f,\, f(C)\subset U\,\}$
	      for $C\Subset X$ and $U\in\tau(Y)$,
	\item $M(V,R):=\{\,f\in\Loc(X,Y,\mathcal{R})\mid 
	      f^{-1}(R)\subset V\,\}$ for $V\in\tau(X)$ and 
	      $R\in\mathcal{R}$.
\end{itemize}
Elements of $\Loc(X,Y,\mathcal{R})$ 
are called \emph{local maps}.
The natural base point of $\Loc(X,Y,\mathcal{R})$ 
is the empty map. 
The set-theoretic union of two local maps $f$ and $g$ 
with disjoint domains will be denoted by $f\sqcup g$.
We define the space of \emph{proper} maps $\Prop(X,Y)$
to be $\Loc(X,Y,\mathcal{K})$,
where $\mathcal{K}=\{K\mid K\Subset Y\}$.
Moreover, in the case when $\mathcal{R}=\{\{y\}\}$
we will write $\Loc(X,Y,y)$ omitting double curly brackets.

Assume that $G$ is a topological group
and $X$, $Y$ are two $G$-spaces.
We will denote by $\Map_G(X,Y)$ 
the subset of $\Map(X,Y)$
consisting of all maps $f$ such that
\[
f(gx)=gf(x)\quad
\text{for all $x\in X$ and $g\in G$}
\]
endowed with the relative topology.
Elements of  $\Map_G(X,Y)$ are called
\emph{equivariant maps} or \emph{$G$-maps}.
Let $\Loc_G(X,Y,\mathcal{R})$ (resp. $\Prop_G(X,Y)$)
be the subspace of $\Loc(X,Y,\mathcal{R})$ 
(resp. $\Prop(X,Y)$) consisting of equivariant maps
with invariant domains
and equipped with the induced topology.
Elements of $\Loc_G(X,Y,\mathcal{R})$
will be called \emph{equivariant local maps}
or \emph{local $G$-maps}.

Assume that $V$ is a real finite dimensional orthogonal
representation of a compact Lie group $G$.
Throughout the paper $\R^k$ denotes a trivial
representation of $G$. Let $\Omega$ be 
an open invariant subset of $\R^k\oplus V$.
Let us introduce the following notation:
\begin{align*}
\mathcal{F}_G(\Omega)&
:=\Loc_G(\Omega,V,0),\\ 
\mathcal{P}_G(\Omega)&:=\Prop_G(\Omega,V).
\end{align*}

Let $I=[0,1]$. 
We assume that the action
of $G$ on $I$ is trivial.
Any element of 
$\Loc_G(I\times\Omega,V,0)$
is called an \emph{otopy} and any element of 
$\Prop_G(I\times\Omega,V)$ is called a \emph{proper otopy}.
In \cite{B} we proved that 
each otopy (resp. proper otopy)
corresponds to a path in $\mathcal{F}_G(\Omega)$ 
(resp. $\mathcal{P}_G(\Omega)$) and vice versa.

Given a (proper) otopy 
$h\colon\Lambda\subset I\times\Omega\to V$ 
we can define for each $t\in I$
sets $\Lambda_t=\{x\in\Omega\mid(t,x)\in\Lambda\}$ and maps
$h_t\colon\Lambda_t\to V$ with $h_t(x)=h(t,x)$.
Note that from the above $h_t$ may be the empty map.
If $h$ is a (proper) otopy, we say that 
$h_0$ and $h_1$ are \emph{(proper) otopic}.
Of course, (proper) otopy gives an equivalence relation 
on $\mathcal{F}_G(\Omega)$ ($\mathcal{P}_G(\Omega)$).
The set of (proper) otopy classes will be denoted by 
$\mathcal{F}_G[\Omega]$ ($\mathcal{P}_G[\Omega]$).
In \cite[Prop. 2.4]{B} we showed that
the map $\mathcal{P}_G[\Omega]\to\mathcal{F}_G[\Omega]$
induced by the inclusion is a bijection.
Observe that if $f\in\mathcal{F}_G(\Omega)$ and $V$ is 
an open invariant subset of $D_f$ 
such that $f^{-1}(0)\subset V$, 
then $f$ and $f\vert_V$ are otopic.
In particular, if $f^{-1}(0)=\emptyset$ then $f$ 
is otopic to the empty map.

Recall that every Lie group is orientable.
Let $G$ be a compact Lie group.
For any $g\in G$ we will denote by $A_g\colon G\to G$
the automorphism given by $A_g(h)=ghg^{-1}$ for all $h\in G$.
Let $T_eG$ denote the tangent space at the unit element.
We say that $G$ is \emph{biorientable} if for each $g\in G$
the derivative $DA_g(e)\colon T_eG\to T_eG$ preserves
the orientation. 
It is not hard to show that if $G$ is abelian, finite or has an odd number
of connected components, then $G$ is biorientable.
The group $O(2)$ seems to be the simplest example
of a compact Lie group which is not biorientable.
Let $V$ be a real finite dimensional representation of a group $G$.
We say that the action of $G$ is \emph{orientation-preserving}
if for each  $g\in G$ the map $v\mapsto g\cdot v$ preserves
the orientation of $V$. In Section \ref{sec:free} we will need
the following result, which can be found in \cite[Lem. 3.4]{GKW}.

\begin{lem}\label{lem:biorient}
Assume $V$ is a real finite dimensional orthogonal
representation of a compact Lie group $G$
and $U$ is an open invariant subset of $V$
on which $G$ acts freely.
If $G$ is biorientable and the action of $G$
is orientation-preserving then $U/G$ is orientable.
\end{lem}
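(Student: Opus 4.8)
The plan is to exhibit $U/G$ as the base of a principal bundle and to construct, upstairs, a $G$-invariant orientation of the horizontal distribution which then descends to the quotient. Since $V$ is orthogonal, its inner product is $G$-invariant and restricts to a $G$-invariant Riemannian metric on $U$; as $G$ is compact and acts freely (hence properly), $M:=U/G$ is a smooth manifold, $\pi\colon U\to M$ is a smooth principal $G$-bundle, and with the quotient metric $\pi$ is a Riemannian submersion. First I would fix the orthogonal splitting $TU=\mathcal{V}\oplus\mathcal{H}$ into the vertical bundle $\mathcal{V}$ (tangent to the orbits) and the horizontal bundle $\mathcal{H}=\mathcal{V}^{\perp}$; both are $G$-invariant subbundles, and $D\pi$ restricts to a vector bundle isomorphism $\mathcal{H}\cong\pi^{*}TM$.

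Next I would orient $TU$, $\mathcal{V}$ and $\mathcal{H}$ compatibly and $G$-invariantly. The bundle $TU\cong U\times V$ inherits the orientation of the vector space $V$, and this orientation is $G$-invariant precisely because the action is orientation-preserving, the differential of the action of $g$ at $u$ being the linear map $v\mapsto g\cdot v$ itself. For $\mathcal{V}$, the fundamental vector fields give a smooth global trivialization $U\times T_eG\xrightarrow{\ \sim\ }\mathcal{V}$, $(u,\xi)\mapsto\frac{d}{dt}\big|_{t=0}\exp(t\xi)u$, so a choice of orientation of $T_eG$ orients $\mathcal{V}$; a short computation identifies the differential of the $G$-action on $\mathcal{V}$, read in this trivialization, with the adjoint maps $DA_g(e)\colon T_eG\to T_eG$, so biorientability of $G$ is exactly what makes this orientation of $\mathcal{V}$ invariant. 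Then, using the splitting $TU=\mathcal{V}\oplus\mathcal{H}$ with a fixed ordering convention (say, vertical before horizontal), the orientations of $TU$ and $\mathcal{V}$ induce an orientation of $\mathcal{H}$, which is $G$-invariant since the other two are and since $\mathcal{H}$ is a $G$-invariant subbundle.

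Finally I would push this orientation down: under $\mathcal{H}\cong\pi^{*}TM$ the induced $G$-action on $\pi^{*}TM$ covers the identity on fibres, because $\pi\circ g=\pi$ forces $D\pi_{gu}\circ Dg_u=D\pi_u$, so a $G$-invariant orientation of $\pi^{*}TM$ is constant along the fibres of $\pi$ and descends to an orientation of $TM$; concretely, one pulls it back along local sections of $\pi$ and invokes $G$-invariance to check the result is independent of the chosen section. This orients $U/G$, as required. The hard part is really this combination of invariance and descent: one cannot simply ``quotient out'' the orientation of $U$, because $\pi^{*}\colon H^{1}(U/G;\Z/2)\to H^{1}(U;\Z/2)$ need not be injective (the fibre $G$ may fail to be simply connected), and it is precisely the two hypotheses — orientation-preservation of the action and biorientability of $G$ — that conspire to produce a genuinely $G$-invariant orientation on $\mathcal{H}$, which is exactly the kind of data a principal bundle lets one carry down to the base.
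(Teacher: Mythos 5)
The paper offers no proof of this lemma at all---it simply cites \cite[Lem. 3.4]{GKW}---so there is nothing internal to compare against; your argument is correct and complete, and it is essentially the standard proof of that cited result. The two key verifications---that orientation-preservation of the linear action makes the product orientation of $TU\cong U\times V$ invariant, and that the $G$-action on the vertical bundle, read through the fundamental-vector-field trivialization, is $(u,\xi)\mapsto(gu,\mathrm{Ad}(g)\xi)=(gu,DA_g(e)\xi)$ so that biorientability is exactly the needed hypothesis---are both carried out correctly, and the descent of the resulting invariant orientation of $\mathcal{H}\cong\pi^{*}T(U/G)$ to the base via $D\pi_{gu}\circ Dg_u=D\pi_u$ and local sections is sound.
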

\begin{rem}
If $G$ is not biorientable then $U/G$ may be
orientable or not, or even may consist of many
components, of which some are orientable
and some are not.
\end{rem}

Assume $V$ is a real finite dimensional orthogonal
representation of a compact Lie group $G$
and $H$ is a closed subgroup of $G$.
Recall that $G_x=\{g\in G\mid gx=x\}$,
$(H)$ stands for a conjugacy class of $H$
and $WH=NH/H$, where $NH$ is a normalizer of $H$ in $G$.
Let $\Omega$ be an open invariant subset of $V$.
We define the following subsets of $\Omega$:
\begin{align*}
\Omega^H &=\{x\in\Omega\mid H\subset G_x\},\\
\Omega_H &=\{x\in\Omega\mid H=G_x\},\\
\Omega_{(H)} &=\{x\in X\mid (H)=(G_x)\}.
\end{align*}
Let
\begin{align*} 
\Phi(G) &=\{(H)\mid \text{$H$ is a closed subgroup of $G$}\},\\
\Iso(\Omega) &=\{(H)\in\Phi(G)\mid\Omega_{(H)}\neq\emptyset\}.
\end{align*}
The set $\Iso(\Omega)$ is partially ordered.
Namely, $(H)\le(K)$ if $H$ is conjugate to a subgroup of $K$.

Throughout the paper we will make use of
the following well-known facts:
\begin{itemize}
  \item $WH$ is a compact Lie group,
	\item $V^H$ is a linear subspace of $V$ and orthogonal
	       representation of $WH$,
	\item the action of $WH$ on $\Omega_H$ is free,
	\item $\Omega_H$ is open and dense in $\Omega^H$.
\end{itemize}


\section{The space of local cross sections of a vector bundle} 
\label{sec:cross}

All manifolds considered are without boundary.
Assume $p\colon E\to M$ is a smooth (i.e., $C^1$) vector bundle. 
We will identify $M$ with the zero section of $E$.
A \emph{local cross section} of a bundle $p\colon E\to M$
is a continuous map $s\colon U\to E$, where $U$ is open in $M$,
$s^{-1}(M)$ is compact and $p\circ s=\id_U$. 
Let $\Gamma_{\text{loc}}(M,E)$ denote 
the set of all local cross sections of $E$ over $M$.
A \emph{fiber otopy} is a continuous map $h\colon\Lambda\to E$
such that $\Lambda$ is open in $I\times M$,
$h^{-1}(M)$ is compact and $p\left(h(t,x)\right)=x$
for all $(t,x)\in\Lambda$.
Let $s,s'\in\Gamma_{\text{loc}}(M,E)$.
We say that $s$ and $s'$ are \emph{fiber otopic}
provided there is a fiber otopy $h$ such that
$h_0=s$ and $h_1=s'$, where $h_t(x)=h(t,x)$.
Of course, fiber otopy gives an equivalence relation 
on $\Gamma_{\text{loc}}(M,E)$,
which will be denoted by $s\sim s'$.
Let $\Gamma_{\text{loc}}[M,E]$ denote 
the set of fiber otopy classes of 
local cross sections of $p\colon E\to M$.
Observe that if $s$ is a local cross section and $V$ is 
an open subset of $D_s$ such that $s^{-1}(M)\subset V$, 
then $s$ and $s\vert_V$ are fiber otopic.
This property of local cross section 
will be called \emph{localization}.
In particular, if $s^{-1}(M)=\emptyset$ then $s$ 
is fiber otopic to the empty map.
The set-theoretic union of two local cross sections $s$ and $s'$ 
with disjoint domains will be denoted by $s\sqcup s'$.

The following result has been proved in \cite[Prop. 3.2]{B}.

\begin{prop}\label{prop:sec}
If $\rank E>\dim M$ then $\Gamma_{\text{loc}}[M,E]$
has a single element.
\end{prop}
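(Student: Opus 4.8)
The plan is to show that any local cross section $s\colon U\to E$ with $\rank E > \dim M$ is fiber otopic to the empty map, which (since fiber otopy is an equivalence relation) immediately gives that $\Gamma_{\text{loc}}[M,E]$ has exactly one element. By the localization property noted just above the statement, we may first replace $s$ by its restriction to a small open neighborhood of the compact set $s^{-1}(M)$; in particular we may assume $\overline{D_s}$ is compact (shrink to a relatively compact open set still containing $s^{-1}(M)$). The strategy is then to perturb $s$ through a fiber otopy to a section that misses the zero section $M$ entirely, and then invoke ``if $s^{-1}(M)=\emptyset$ then $s$ is fiber otopic to the empty map.''

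First I would use smooth approximation: since $p\colon E\to M$ is a $C^1$ vector bundle, the section $s$ can be approximated (on the relatively compact domain, rel a neighborhood of the boundary where $s$ is already nonzero, or after localizing so that we need no relative condition) by a $C^1$ section $\tilde s$ that is $C^0$-close to $s$; the straight-line homotopy $u\mapsto (1-u)s + u\tilde s$ in each fiber is a fiber otopy (its domain is open in $I\times M$, the preimage of $M$ stays inside a fixed compact set by the closeness, and it is fiberwise the identity under $p$). So without loss of generality $s$ is $C^1$. Next, consider the $C^1$ map $s\colon U\to E$ and the zero section $M\subset E$, which has codimension $\rank E$ in the total space $E$. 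Because $\dim U = \dim M < \rank E$, a generic (Sard/transversality) perturbation $s'$ of $s$, again $C^0$-small and joined to $s$ by the fiberwise straight-line fiber otopy, is transverse to $M$; but transversality with $\dim U + \dim M < \dim E$ forces $s'(U)\cap M = \emptyset$. Hence $(s')^{-1}(M) = \emptyset$, and $s'$ — and therefore $s$ — is fiber otopic to the empty map.

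The main obstacle is making the perturbation arguments respect the ``local'' structure, i.e.\ keeping control of the set $h^{-1}(M)$ throughout the fiber otopy so that it remains compact and contained in $D_h$ (the openness of $\Lambda$ in $I\times M$ is automatic, but properness of the vanishing locus is the delicate point). This is handled by choosing all perturbations uniformly small on a fixed relatively compact neighborhood of $s^{-1}(M)$: if $\|s'-s\|$ is smaller than the distance from $s(\partial$-collar$)$ to $M$, the zero set cannot escape, so compactness is preserved. A secondary technical point is that $M$ need not be compact, so transversality/Sard should be applied locally (cover $s^{-1}(M)$ by finitely many trivializing charts of $E$ over coordinate charts of $M$) and the local perturbations patched with a partition of unity; within each trivialization $E|_{O}\cong O\times\R^{\rank E}$ a local cross section is just a map $O\to\R^{\rank E}$, and the dimension count $\dim O < \rank E$ is exactly the hypothesis that lets a generic such map avoid $0$. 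Assembling these local nonvanishing perturbations over the finite cover yields a global $s'$ with empty zero set, completing the proof.
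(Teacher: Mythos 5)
Your argument is correct: localize to a relatively compact neighbourhood of $s^{-1}(M)$, perturb to a section transverse to the zero section via small fiberwise straight-line otopies, and conclude from $\dim M<\rank E$ that the perturbed section has empty zero set, hence is fiber otopic to the empty map. The paper does not reprove this proposition but cites \cite[Prop.~3.2]{B}; your transversality route is the standard one and is exactly the mechanism (cf.\ Lemma~\ref{lem:trans} and the localization property) that the paper itself deploys for the analogous statements in Section~\ref{sec:proof}, so this counts as essentially the same approach.
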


In the remainder of this and in the next section
we assume that $\rank E=\dim M$.
Let us denote by $\I_2(s)$ the mod $2$ intersection number
and by $\I(s)$ the oriented intersection number
of a local cross section $s$ with the zero section
(see for instance \cite{GP}).
It is evident that in both cases the intersection number
is otopy invariant i.e.\ if two local cross sections
are fiber otopic then they have the same intersection number,
but more interestingly, the converse is also true.
Namely, the following result, which may be viewed as a version
of the well-known Hopf theorem, will be proved in the next section.

\begin{thm}\label{thm:intersection}
Let $M$ be connected.
\begin{enumerate}
	\item If $E$ is orientable as a manifold then 
	      $\I\colon\Gamma_{\text{loc}}[M,E]\to\Z$ is a bijection.
	\item If $E$ is non-orientable as a manifold then 
	      $\I_2\colon\Gamma_{\text{loc}}[M,E]\to\Z_2$ is a bijection.				
\end{enumerate}
\end{thm}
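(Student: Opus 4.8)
The plan is to prove both parts simultaneously by constructing an inverse to the intersection-number map, using the standard technology for manipulating transverse sections together with the localization property of local cross sections. Throughout, write $d$ for the target group ($\Z$ in case (1), $\Z_2$ in case (2)) and $\I$ for the appropriate intersection number. Since $\I$ is additive under disjoint union and otopy-invariant, and since every local cross section is fiber otopic to one whose zero set is an interior compact set, the first reduction is to show that every class contains a \emph{nice representative}: a smooth local section $s\colon U\to E$ transverse to the zero section $M$, with $s^{-1}(M)$ a finite set of points contained in a coordinate ball over which $E$ is trivial. This uses a smooth approximation (the bundle is $C^1$, so Thom transversality applies after shrinking $U$) and the localization property to discard everything outside a small neighbourhood of $s^{-1}(M)$; connectedness of $M$ lets one slide the finitely many zeros into one chart. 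One must check that smooth approximation does not change the fiber otopy class --- this follows by taking a straight-line fiber homotopy in each fiber, which is proper after a further localization.

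The second step is to compute $\I$ on these nice representatives and to realize every value. Over a trivializing ball $B\times\R^n\cong p^{-1}(B)$ with $n=\rank E=\dim M$, a nice section is essentially a compactly supported map $B\to\R^n$ with only regular values at $0$, and $\I$ is the sum of the local indices $\sgn\det Ds$ at the zeros (in case (2), the count mod $2$). Surjectivity of $\I$ is then immediate: place $k$ zeros of index $+1$ in the chart (or $k$ zeros, period, in case (2)) by an explicit local model. For case (1) one must be slightly careful that the orientation of $E$ as a manifold is what makes $\sgn\det$ well defined globally; the hypothesis that $E$ is orientable is used exactly here, and in the non-orientable case only the mod $2$ count survives, which is why $d$ drops to $\Z_2$.

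The heart of the argument --- and the step I expect to be the main obstacle --- is injectivity: if two nice representatives $s_0,s_1$ have $\I(s_0)=\I(s_1)$, produce a fiber otopy between them. After localizing, assume both have all zeros in one chart $B\times\R^n$. Within that chart the problem becomes: two compactly supported maps $B\to\R^n$, transverse to $0$, with equal (signed or mod $2$) count of preimages of $0$, are properly homotopic through maps transverse to $0$ near the ends. One first cancels zeros in oppositely-signed (or, in case (2), in any) pairs by the classical Whitney-type move: join a pair by an embedded arc in $B$ avoiding the other zeros, and perform a homotopy supported in a tube around the arc that merges and annihilates them; this requires $\dim M\ge 1$, which holds since $\rank E=\dim M\ge1$ (the case $\dim M=0$ being vacuous or trivial). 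Iterating, reduce $s_0$ and $s_1$ to sections whose zero sets are $|k|$ points of the same sign sitting in tiny disjoint balls; then a further isotopy of $B$ moving these balls onto each other, covered by a bundle isotopy, together with the fact that any two orientation-preserving (resp.\ arbitrary, in case (2)) local models at a single transverse zero are fiber-homotopic rel boundary of a small ball, finishes the identification. Finally one must argue that the concatenation of all these homotopies, each of which is proper on its support, assembles into a single \emph{proper} fiber otopy over all of $M$; this is where the localization property and the compactness of each intermediate zero set are invoked once more to trim the domain at every stage. The delicate points are keeping all intermediate sections transverse to $M$ near $t=0,1$ so the zero sets stay compact, and checking that the arc-cancellation move can be made global (i.e., that the arc can be chosen inside a single chart), both of which are handled by the reduction to a single trivializing ball carried out in the first step.
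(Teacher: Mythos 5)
Your proposal is correct and follows essentially the same route as the paper: transversality to reduce to a disjoint union of standard (generic, localized) sections, annihilation of oppositely-signed pairs of zeros, and translation along paths (using connectedness of $M$) to match the remaining same-sign local models, with orientability of $E$ entering exactly where you say it does. The only cosmetic difference is that you gather all zeros into a single trivializing chart before cancelling, whereas the paper keeps them on small disjoint balls and moves them individually.
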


\begin{rem}
Observe that if $E=TM$ then local cross sections are local vector fields.
Let $\mathcal{V}_{\text{loc}}[M]:=\Gamma_{\text{loc}}[M,TM]$. Since
$TM$ is always orientable as a manifold we obtain that
$\I\colon\mathcal{V}_{\text{loc}}[M]\to\Z$ is a bijection
for $M$ smooth connected.
\end{rem}

\begin{rem}\label{rem:rank}
For now, we do not have any satisfactory
description of $\Gamma_{\text{loc}}[M,E]$
if $\rank E<\dim M$.
\end{rem}

\section{Proof of Theorem \texorpdfstring{\ref{thm:intersection}}{2.2}} 
\label{sec:proof}

We have divided the proof of Theorem \ref{thm:intersection}
into a sequence of lemmas and propositions.
We will need the following transversality result 
for sections of a smooth vector bundle,
which can be easily derived from 
the transversality theorem for maps.
A smooth local cross section is called \emph{generic}
if it is transverse to the zero section.

\begin{lem}\label{lem:trans}
Arbitrarily close (in the $C^1$ sense) 
to any smooth local cross section 
of a smooth vector bundle there exists
a generic local cross section
(both have the same domain).
\end{lem}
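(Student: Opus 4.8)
The plan is to deduce the lemma from the transversality theorem for maps between manifolds (i.e.\ the parametric transversality theorem), applied to a suitable finite-dimensional family of perturbations of the given section $s\colon U\to E$. Throughout, write $K=s^{-1}(M)$, which is compact by the definition of a local cross section, and fix a metric on the fibres of $E$.

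The first step is to localize the perturbation near $K$. Choose open sets $A\Subset B\Subset U$ with $K\subset A$, and a $C^1$ function $\lambda\colon M\to[0,1]$ that is identically $1$ on a neighbourhood $N$ of $\overline A$ and is supported in $B$; put $\delta=\inf\{\abs{s(x)}\mid x\in\overline B\setminus A\}$, which is positive since $s$ has no zeros on the compact set $\overline B\setminus A$. Now observe that if $\sigma$ is any $C^1$ section of $E$ over $U$ supported in $B$ with $\abs{\sigma(x)}<\delta$ for all $x\in\overline B$, then $s+\sigma$ is again a local cross section with domain $U$ whose zero set is contained in $A$, hence compact: indeed $s+\sigma=s$ outside $B$, while $\abs{(s+\sigma)(x)}\ge\abs{s(x)}-\abs{\sigma(x)}>0$ on $\overline B\setminus A$. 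Consequently it suffices to produce, arbitrarily close to $0$ in the $C^1$ sense, a section $\sigma$ of this kind for which $s+\sigma$ is transverse to the zero section over $N$; since the zeros of such an $s+\sigma$ all lie in $A\subset N$, the section $s+\sigma$ is then generic.

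The second step is to build the family. Covering $\overline A$ by finitely many trivializing charts, taking local frames on them and multiplying by a subordinate $C^1$ partition of unity (extended by zero), one obtains $C^1$ sections $e_1,\dots,e_p$ of $E$ over $U$ such that $e_1(x),\dots,e_p(x)$ span $E_x$ for every $x$ in some neighbourhood of $\overline A$; after shrinking $N$ and $B$ we may assume this holds on $N$. Define $F\colon U\times\R^p\to E$ by $F(x,a)=s(x)+\lambda(x)\sum_{j=1}^{p}a_je_j(x)$; then $F(\cdot,0)=s$, and for $\abs a$ small the section $F(\cdot,a)$ is a perturbation of $s$ of the kind described above. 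The key point is that the restriction $F|_{N\times\R^p}$ is transverse to the zero section $M\subset E$: at a point $(x,a)$ with $F(x,a)=0_x\in E_x$ and $x\in N$, the derivative of $F$ in the $a$-variables sends $b$ to $\lambda(x)\sum_j b_je_j(x)=\sum_j b_je_j(x)$, which already surjects onto the vertical tangent space $E_x=T_{0_x}E_x$, and $E_x$ together with $T_{0_x}M$ spans $T_{0_x}E$; so $F|_{N\times\R^p}$ is in fact a submersion at every point of its preimage of $M$.

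The last step is to apply the parametric transversality theorem: since $F|_{N\times\R^p}$ is transverse to $M$, for almost every $a\in\R^p$ the section $F(\cdot,a)|_N$ is transverse to $M$. Choosing such an $a$ with $\abs a$ small, the section $s'=F(\cdot,a)$ has domain $U$, is as close to $s$ in the $C^1$ sense as we wish, and is generic by the reduction made in the first step. The only place where the degree of regularity enters is Sard's theorem applied to the projection to $\R^p$ of the $C^1$ manifold $(F|_{N\times\R^p})^{-1}(M)$; since $\rank E=\dim M$ this manifold has dimension $p$, equal to that of its target, so $C^1$ suffices. The genuine difficulty is thus concentrated in the first step: keeping the perturbed section a local cross section on the same domain $U$ and forcing all of its zeros into the region $N$ where transversality has been achieved — this is exactly where one must reckon with the fact that $U$ itself need not be compact, only $K$ is.
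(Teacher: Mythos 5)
Your argument is correct and follows exactly the route the paper indicates: the paper states this lemma without proof, remarking only that it can be easily derived from the transversality theorem for maps, and your localization-plus-parametric-transversality argument is the standard way to carry that out. The care you take with the non-compactness of $U$ (confining the perturbation to a relatively compact neighbourhood of the compact zero set so that the perturbed section is still a local cross section on the same domain) and with the $C^1$ version of Sard's theorem under the standing assumption $\rank E=\dim M$ addresses precisely the points that need attention.
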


Let us denote by $B(p;r)$ the open $r$-ball in $M$ around $p$.
A generic local cross section $s$ is called \emph{standard}
if $D_s=B(p;r)$ and $s^{-1}(M)=\{p\}$.
The proof of Theorem \ref{thm:intersection}
is based on the following propositions and lemmas.

\begin{prop}\label{prop:union}
Any local cross section is otopic to a finite disjoint union
of standard ones.
\end{prop}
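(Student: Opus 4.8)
The plan is to reduce an arbitrary local cross section to a manageable normal form in three stages: first make it smooth and generic, then localize around its (finite) zero set, and finally separate the zeros into disjoint balls carrying standard sections. Let $s\colon U\to E$ be a local cross section with $s^{-1}(M)\Subset U$. The first step is to replace $s$ by a fiber otopic smooth section: using a smooth partial partition of unity on a neighbourhood of the compact set $s^{-1}(M)$ one approximates $s$ by a $C^1$ section $s'$ on a slightly smaller open set $U'$ still containing $s^{-1}(M)$, with $s'$ close enough to $s$ that the straight-line homotopy $h(t,x) = (1-t)s(x) + t s'(x)$ (computed fiberwise in $E$) has $h^{-1}(M)$ compact; this $h$ is a fiber otopy from $s|_{U'}$ to $s'$, and by localization $s$ is fiber otopic to $s|_{U'}$. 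Then Lemma \ref{lem:trans} produces a generic section $s''$ arbitrarily $C^1$-close to $s'$ on the same domain; a second straight-line fiber otopy connects $s'$ to $s''$. Since $s''$ is transverse to the zero section and $\rank E = \dim M$, the zero set $(s'')^{-1}(M)$ is a compact $0$-manifold, hence a finite set of points $p_1,\dots,p_k$.

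The second step is localization around these points. Choose pairwise disjoint open balls $B(p_j; r_j)\subset D_{s''}$, each small enough to lie inside a trivializing chart for the bundle; their union $V$ is an open subset of $D_{s''}$ containing $(s'')^{-1}(M)$, so by localization $s''$ is fiber otopic to $s''|_V = \bigsqcup_{j=1}^{k} s''|_{B(p_j;r_j)}$. It therefore remains to show that each restriction $s''|_{B(p_j;r_j)}$ — a generic section of a trivial bundle over a ball with a single transverse zero at the centre — is fiber otopic to a standard one. Working in the trivialization $E|_{B(p_j;r_j)} \cong B(p_j;r_j)\times\R^n$, the section is a map $\sigma\colon B(p_j;r_j)\to\R^n$ with $\sigma^{-1}(0) = \{p_j\}$ and $D\sigma(p_j)$ invertible. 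One first shrinks the ball (localization again) so that $\sigma$ is close to its linearization, then uses the straight-line homotopy from $\sigma$ to $D\sigma(p_j)\cdot(x - p_j)$; on a small enough ball this homotopy has no new zeros, giving a fiber otopy to a linear isomorphism. A linear isomorphism of $\R^n$ is a standard section already (its only zero is the centre and it is transverse there), which completes this step.

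The main obstacle I expect is making the localization and approximation steps genuinely compatible with the \emph{fiber otopy} relation rather than ordinary homotopy: the straight-line homotopies must be controlled so that $h^{-1}(M)$ stays compact (equivalently, so that no zeros escape to the boundary of the domain), and this requires choosing the approximation parameters after the domain has been shrunk to relatively compact pieces — the order of quantifiers matters. A related subtlety is that shrinking a ball $B(p;r)$ to $B(p;r')$ and then claiming the result is "standard" needs the convention (set just before the proposition) that $D_s = B(p;r)$ and $s^{-1}(M) = \{p\}$, so one should verify the final pieces literally have this form, which the linearization argument arranges. Everything else — smoothing, transversality, and the bookkeeping of disjoint unions — is routine given Lemma \ref{lem:trans} and the localization property.
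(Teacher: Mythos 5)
Your proposal is correct and follows essentially the same route as the paper: approximate by a generic section via Lemma \ref{lem:trans} (with straight-line fiber otopies controlled so no zeros escape), then use the localization property to restrict to disjoint balls around the finitely many transverse zeros. Your final linearization step inside each ball is unnecessary, since a \emph{standard} section is by definition just a generic one with domain $B(p;r)$ and $s^{-1}(M)=\{p\}$, which your second step already produces.
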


\begin{proof}
By Lemma \ref{lem:trans}, any local cross section is otopic
(also homotopic) to generic one and by localization property,
any generic local cross section is otopic
to a finite disjoint union of standard ones.
\end{proof}

Let us state three lemmas concerning standard
local cross sections and needed below. 
We omit their proofs since they 
either follow directly from definitions
or are very similar to those from the topological degree theory
(see for example \cite{Br}). 
In the following lemmas we assume that all considered 
local cross sections are defined on the ball $B(p;r)$.
Moreover, assume that there are a local trivialization
$\psi\colon U\times\R^n\to E$ and a coordinate system
$\phi\colon W\subset M\to\R^n$ such that
$B(p;r)\subset U\cap W$ and $\phi(p)=0$.
Of course, if $r$ is small enough such a trivialization
and a coordinate system always exist.
Furthermore, under the above assumptions
any standard local cross section has the form
$s(x)=\psi\left(x,f\left(\phi(x)\right)\right)$
with $f\colon\phi(B(p;r))\subset\R^n\to\R^n$ smooth. 
Finally, observe that $s_M(x)=\psi(x,M\phi(x))$
is a standard local cross section for any $M\in\Gl_n(\R)$.

\begin{lem}\label{lem:inlinear}
Let $A\in\Gl_n(\R)$.
\begin{enumerate}
	\item If $E$ is orientable as a manifold then $\I(s_A)=\sgn\det A$.
	\item If $E$ is non-orientable as a manifold then $\I_2(s_A)=1$.				
\end{enumerate}
\end{lem}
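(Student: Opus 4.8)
The plan is to reduce both statements to the single point $p=s_A^{-1}(M)$ and a linear-algebra computation in the local coordinates furnished by $\psi$ and $\phi$. First note that $s_A$ really is standard: $s_A(x)\in M$ iff $A\phi(x)=0$ iff $x=p$, so $s_A^{-1}(M)=\{p\}$, and in the product chart on $E|_{B(p;r)}$ determined by $\psi$ and $\phi$ the section $s_A$ is just the graph of the linear isomorphism $A\colon\R^n\to\R^n$ lying over the zero section $\R^n\times\{0\}$, meeting it transversally at the origin because $A$ is invertible. Part (2) is then immediate: the mod $2$ intersection number of a section transverse to the zero section is the parity of the (finite) number of intersection points, here equal to $1$, so $\I_2(s_A)=1$.

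For part (1), recall that $\I$ is a fiber-otopy invariant and additive over the finite transverse preimage of the zero section. On the one hand this shows $\I(s_A)$ equals the local intersection sign of $s_A$ at $p$; in the chart above $D(s_A)_p(v)=(v,Av)$ inside $T_pE\cong\R^n\times\R^n$, and the local sign is the sign of the determinant of the $2n\times 2n$ change-of-frame matrix between the oriented frame $\big((e_1,Ae_1),\dots,(e_n,Ae_n),(e_1,0),\dots,(e_n,0)\big)$ and the coordinate frame of $T_pE$; a single block row operation reduces this determinant to $\pm\det A$ with an $A$-independent overall sign. On the other hand, since $t\mapsto s_{A(t)}$ is a fiber otopy through standard sections for every path in $\Gl_n(\R)$, $\I(s_A)$ depends only on the connected component of $A$; as $\Gl_n^{+}(\R)$ and $\Gl_n^{-}(\R)$ are each path-connected, it suffices to fix the overall sign on the two model cases $\I(s_I)=1$ and $\I(s_{R_0})=-1$, where $R_0=\operatorname{diag}(-1,1,\dots,1)$ --- and these are exactly the normalization built into the oriented intersection number of \cite{GP} (equivalently, the classical fact that the local index of a map $\R^n\to\R^n$ at a nondegenerate zero is the sign of the determinant of its derivative). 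Combining, $\I(s_A)=\sgn\det A$.

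The one genuinely delicate point is the orientation bookkeeping underlying part (1). Even though $M$ need not be orientable, $B(p;r)$ is, and --- more importantly --- the orientation of the base enters the local sign twice, once through the domain of $s_A$ and once through the tangent space to the zero section, i.e.\ in two $n$-dimensional slots whose orientation reversals cancel; thus the local sign is governed solely by the fixed orientation of the total space $E$ and by $\sgn\det A$. One has to verify this cancellation carefully and check that the resulting normalization matches the convention adopted for $\I$ in \cite{GP}.
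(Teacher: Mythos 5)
The paper gives no proof of this lemma: it is one of the three lemmas the author explicitly omits as ``follow[ing] directly from definitions or \dots very similar to those from the topological degree theory.'' Your argument is precisely the standard computation being alluded to, and it is correct: in the product chart coming from $\psi$ and $\phi$ the section $s_A$ is the graph of $A$ with the single transverse zero $p$, which settles $\I_2(s_A)=1$ at once; the oriented local sign is the sign of $\det\bigl(\begin{smallmatrix} I & I\\ A & 0\end{smallmatrix}\bigr)=(-1)^n\det A$ up to the $A$-independent comparison with the orientation of $E$; and the path-connectedness of $\Gl_n^{\pm}(\R)$ together with the otopy invariance of $\I$ reduces everything to the two model cases. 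Your closing discussion of why only the orientation of the total space $E$ (and not of $M$) is needed --- the base orientation entering twice and cancelling --- is exactly the point that makes the hypothesis ``$E$ orientable as a manifold'' the right one, and it is the least routine part of the lemma.

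One small caveat worth making explicit: as you yourself half-acknowledge, the literal equality $\I(s_A)=\sgn\det A$ (rather than $\I(s_A)=\varepsilon\,\sgn\det A$ for a fixed $\varepsilon=\pm1$) presupposes that the local chart determined by $\psi$ and $\phi$ is positively oriented with respect to the chosen orientation of $E$; the normalization of \cite{GP} fixes the convention for the index of a zero in oriented coordinates but does not by itself force the chart to be compatibly oriented. This is a pure bookkeeping convention, and it is harmless for the paper: every downstream use of the lemma (surjectivity of $\I$, Corollary 3.6, the annihilation argument) only needs $\abs{\I(s_A)}=1$ and the fact that $\I(s_A)$ is determined by $\sgn\det A$ up to one global sign.
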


\begin{lem}\label{lem:sgndet}
Let $A,B\in\Gl_n(\R)$. 
\begin{enumerate}
	\item If $E$ is orientable as a manifold and $\sgn\det A=\sgn\det B$
	      then $s_A$ and $s_B$ are fiber otopic.
	\item If $E$ is non-orientable as a manifold 
	      then $s_A$ and $s_B$ are fiber otopic.				
\end{enumerate}
\end{lem}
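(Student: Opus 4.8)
The plan for Lemma~\ref{lem:sgndet} is to reduce both parts to the two–component structure of $\Gl_n(\R)$, and then, for part (2) only, to add a transport (holonomy) argument around an orientation–reversing loop. The first step, which needs no orientability hypothesis, is the claim: \emph{if $\sgn\det A=\sgn\det B$ then $s_A\sim s_B$}. Since $A$ and $B$ lie in the same path component of $\Gl_n(\R)$, choose a path $t\mapsto A_t$ from $A$ to $B$ in $\Gl_n(\R)$ and set
\[
h\colon I\times B(p;r)\to E,\qquad h(t,x)=\psi\bigl(x,A_t\phi(x)\bigr).
\]
This $h$ is continuous, satisfies $p\bigl(h(t,x)\bigr)=x$, and — as each $A_t$ is invertible — $h(t,x)$ lies on the zero section exactly when $\phi(x)=0$, so $h^{-1}(M)=I\times\{p\}$ is compact; thus $h$ is a fiber otopy with $h_0=s_A$ and $h_1=s_B$. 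This is precisely part (1). Fixing once and for all matrices $A_+$ with $\det A_+>0$ and $A_-$ with $\det A_-<0$, the same step shows that every $s_A$ is fiber otopic to $s_{A_+}$ or to $s_{A_-}$, so by transitivity of $\sim$ part (2) reduces to the single relation $s_{A_+}\sim s_{A_-}$.

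For that relation, recall that along the zero section $TE$ is an extension of $TM$ by $E$, so the total space $E$ is orientable as a manifold if and only if the bundle $TM\oplus E\to M$ is orientable. Hence non-orientability of $E$ furnishes a smooth loop $\gamma\colon I\to M$ based at $p$ around which a continuously transported orientation of $TM\oplus E$ returns reversed. Choose $\rho>0$ (uniform in $t$, by compactness of $\gamma(I)$) so that every $B(\gamma(t);\rho)$ lies inside a trivializing coordinate chart; by the localization property we may take $\rho$ as the common radius of the standard sections in play. Then $\Lambda=\{(t,x)\mid x\in B(\gamma(t);\rho)\}$ is open in $I\times M$ with $\Lambda_0=\Lambda_1=B(p;\rho)$, and — subdividing $I$ finitely and gluing local models, as in the construction of degree — one builds a fiber otopy $h\colon\Lambda\to E$ whose slice $h_t$ is, for each $t$, a standard local cross section with unique zero $\gamma(t)$ and with $h_0=s_{A_+}$. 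Its terminal slice is a standard section on $B(p;\rho)$, say $h_1(x)=\psi\bigl(x,f(\phi(x))\bigr)$, and the straightening homotopy $f_s(u)=s^{-1}f(su)$ (which keeps the zero isolated for $s\in(0,1]$ and tends to $L:=Df(0)$ as $s\to0^+$) gives $h_1\sim s_L$. It remains to see that $\sgn\det L=-1$: let $\iota(t)\in\{\pm1\}$ be the local index of $h_t$ at $\gamma(t)$, computed against an orientation of $TM\oplus E$ near $\gamma(t)$ chosen to vary continuously in $t$ and to equal, at $t=0$, the reference orientation coming from $(\phi,\psi)$. Then $\iota$ is locally constant, so $\iota(1)=\iota(0)=\sgn\det A_+=+1$ (the linear-algebra computation underlying Lemma~\ref{lem:inlinear}); but at $t=1$ that reference orientation has been carried once around $\gamma$ and hence reversed, so re-expressing $\iota(1)$ against the untransported reference orientation yields $\sgn\det L=-1$. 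Therefore $\det L<0$, whence $s_L\sim s_{A_-}$ by the first step; combining, $s_{A_+}\sim h_1\sim s_L\sim s_{A_-}$, which proves part (2).

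I expect the genuinely non-formal ingredient to be the transport argument identifying $\sgn\det L$ with the orientation change of $TM\oplus E$ around $\gamma$: this is exactly where the hypothesis that $E$ is non-orientable is used, and it is the only place where the two determinant-sign classes get merged. By contrast, the construction of the fiber otopy $h$ over the varying domain $\Lambda$ (gluing the local bump models across a finite subdivision of $I$) and the straightening homotopy reducing a standard section to its linear part are routine and parallel standard arguments from topological degree theory, as the statement anticipates.
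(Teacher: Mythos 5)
The paper deliberately omits the proof of this lemma, stating only that it is ``very similar to those from the topological degree theory''; your proposal fills in exactly that standard argument --- connecting $A$ to $B$ through a path in $\Gl_n(\R)$ within a fixed trivialization for part (1), and for part (2) transporting a standard section around a loop that reverses the orientation of $TM\oplus E$ (equivalently, of the manifold $E$) to flip the sign of the linearization --- and it is correct. The only implicit point worth flagging is that part (2) uses connectedness of $M$ (so that an orientation-reversing loop can be based at $p$), which is harmless here since these lemmas feed into Theorem~\ref{thm:intersection}, where $M$ is assumed connected.
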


\begin{lem}\label{lem:linear}
If $s(x)=\psi\left(x,f\left(\phi(x)\right)\right)$
is a standard local cross section 
then so is $\wt{s}(x)=\psi(x,Df(0)\phi(x))$ and there is
$R\in(0,r)$ such that the straight-line homotopy
between $s$ and $\wt{s}$ on $B(p;R)$ is a fiber otopy.
\end{lem}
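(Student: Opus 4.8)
The plan is to treat the two assertions separately: first that $\wt s$ is again a standard local cross section, and then that on a sufficiently small ball the straight-line homotopy from $s$ to $\wt s$ avoids the zero section outside $p$, which is exactly what makes it a fiber otopy.

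For the first assertion, I would start from the fact that $s$, being standard, is generic, hence transverse to the zero section at its only zero $p$. Written in the trivialization $\psi$ and the chart $\phi$, and using $\rank E=\dim M=n$, transversality at $p$ says precisely that the derivative $Df(0)\colon\R^n\to\R^n$ of the fiber component is an isomorphism, i.e.\ $Df(0)\in\Gl_n(\R)$. Then $\wt s=s_{Df(0)}$ in the notation fixed before the statement, so it is a standard local cross section: its only zero on $B(p;r)$ is $p$ because $Df(0)$ is invertible, and it is automatically transverse there.

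For the second assertion, write $f(y)=Df(0)y+\eta(y)$ with $\abs{\eta(y)}=o(\abs{y})$ as $y\to0$ (available since $f$ is $C^1$), and consider the fiber-preserving homotopy
\[
H(t,x)=\psi\bigl(x,\;(1-t)f(\phi(x))+t\,Df(0)\phi(x)\bigr),\qquad(t,x)\in I\times B(p;r),
\]
whose slices are $H_0=s$ and $H_1=\wt s$. Since $p(H(t,x))=x$, the only thing to verify for $H$ to be a fiber otopy (after restriction) is compactness of $H^{-1}(M)$. The fiber component of $H$ equals $g_t(y):=Df(0)y+(1-t)\eta(y)$, and using invertibility of $Df(0)$ one gets, uniformly in $t\in I$ (because $1-t\in[0,1]$),
\[
\abs{g_t(y)}\ \ge\ \abs{Df(0)y}-\abs{\eta(y)}\ \ge\ c\abs{y}-\abs{\eta(y)},\qquad c:=\|Df(0)^{-1}\|^{-1}>0.
\]
Choosing $\delta>0$ with $\abs{\eta(y)}\le (c/2)\abs{y}$ for $\abs{y}\le\delta$ yields $\abs{g_t(y)}\ge (c/2)\abs{y}>0$ for $0<\abs{y}\le\delta$. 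Finally I would pick $R\in(0,r)$ so small that $\phi(B(p;R))$ lies in the $\delta$-ball about $0$; then for $x\in B(p;R)\setminus\{p\}$ we have $0<\abs{\phi(x)}\le\delta$, hence $g_t(\phi(x))\ne0$ for every $t$, so $H^{-1}(M)\cap(I\times B(p;R))=I\times\{p\}$ is compact. Thus $H|_{I\times B(p;R)}$ is a fiber otopy between $s|_{B(p;R)}$ and $\wt s|_{B(p;R)}$, which is the claim.

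The only step needing genuine care is the uniform lower bound for $\abs{g_t(y)}$ near $p$: it is what guarantees that the straight-line homotopy is admissible as an otopy, and it relies on combining the $C^1$-estimate $f(y)-Df(0)y=o(\abs{y})$ with invertibility of $Df(0)$; everything else is bookkeeping with the chart, the trivialization, and the localization property of local cross sections.
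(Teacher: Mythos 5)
Your proof is correct and is exactly the standard linearization argument from topological degree theory that the paper alludes to when it omits the proofs of Lemmas \ref{lem:inlinear}--\ref{lem:linear}: transversality at the unique zero gives $Df(0)\in\Gl_n(\R)$, and the uniform lower bound $\abs{g_t(y)}\ge(c/2)\abs{y}$ on a small ball is precisely what makes the straight-line homotopy admissible as a fiber otopy. No gaps.
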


The above map $\wt{s}$ will be called a \emph{linearization}
of the standard local cross section~$s$.
The next result follows easily from Lemmas
\ref{lem:inlinear} and \ref{lem:linear}.

\begin{cor}\label{cor:normal}
Let $s$ be a standard local cross section.
\begin{enumerate}
	\item If $E$ is orientable as a manifold then $\abs{\I(s)}=1$.
	\item If $E$ is non-orientable as a manifold then $\I_2(s)=1$.				
\end{enumerate}
\end{cor}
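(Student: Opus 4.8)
The plan is to reduce a standard local cross section to the linear model $s_A$ by means of the linearization supplied by Lemma~\ref{lem:linear}, and then read off the intersection number from Lemma~\ref{lem:inlinear}. So fix a standard local cross section $s$ with $D_s=B(p;r)$ and $s^{-1}(M)=\{p\}$, and choose a local trivialization $\psi$ and a coordinate system $\phi$ with $\phi(p)=0$ exactly as in the paragraph preceding Lemma~\ref{lem:inlinear}, so that $s(x)=\psi\left(x,f\left(\phi(x)\right)\right)$ with $f$ smooth on $\phi(B(p;r))$ and $f(0)=0$, $f^{-1}(0)=\{0\}$.

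First I would check that $A:=Df(0)$ lies in $\Gl_n(\R)$. Since $s$ is standard it is in particular generic, i.e.\ transverse to the zero section, and its only intersection with $M$ is at $p$; in the chosen chart, transversality of $s$ to $M$ at $p$ together with the standing assumption $\rank E=\dim M=n$ forces $Df(0)$ to be an isomorphism. With $A=Df(0)$, Lemma~\ref{lem:linear} then says that the linearization $\wt{s}(x)=\psi\left(x,A\phi(x)\right)=s_A(x)$ is itself a standard local cross section, and that the straight-line homotopy between $s$ and $\wt{s}$ is a fiber otopy when restricted to some ball $B(p;R)$ with $R\in(0,r)$.

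Next I would pass to fiber otopy classes. By the localization property, $s\sim s\vert_{B(p;R)}$ and $\wt{s}\sim\wt{s}\vert_{B(p;R)}$, since each of these cross sections meets $M$ only at the point $p\in B(p;R)$; combined with the fiber otopy of the previous step this gives $s\sim s_A$. Because the oriented (resp.\ mod $2$) intersection number with the zero section is a fiber otopy invariant, we conclude $\I(s)=\I(s_A)$ when $E$ is orientable as a manifold and $\I_2(s)=\I_2(s_A)$ when $E$ is non-orientable.

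Finally, Lemma~\ref{lem:inlinear} evaluates these: in the orientable case $\I(s)=\I(s_A)=\sgn\det A\in\{-1,1\}$, hence $\abs{\I(s)}=1$; in the non-orientable case $\I_2(s)=\I_2(s_A)=1$. This is the whole argument. The only step that is not pure bookkeeping — and thus the closest thing to an obstacle — is the verification that $Df(0)$ is invertible, which is precisely the place where the running hypotheses that $s$ is generic and that $\rank E=\dim M$ are used; everything else reduces to the localization property and the otopy invariance of the intersection number.
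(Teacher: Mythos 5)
Your proof is correct and follows exactly the route the paper indicates: the paper simply states that the corollary "follows easily from Lemmas \ref{lem:inlinear} and \ref{lem:linear}," and your argument — linearize via Lemma \ref{lem:linear}, use localization and otopy invariance of the intersection number to get $\I(s)=\I(s_{Df(0)})$, then evaluate with Lemma \ref{lem:inlinear} — is precisely that, with the useful added detail that transversality plus $\rank E=\dim M$ forces $Df(0)\in\Gl_n(\R)$.
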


\begin{prop}\label{prop:hopfstand}
Let $s$ and $s'$ be standard local cross sections.
\begin{enumerate}
	\item If $E$ is orientable as a manifold
	      and $\I(s)=\I(s')$ then $s\sim s'$.
	\item If $E$ is non-orientable as a manifold
	      then $s\sim s'$.				
\end{enumerate}
\end{prop}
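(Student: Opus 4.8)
The plan is to reduce everything to the linear model sections $s_A$, $s_B$ with $A,B\in\Gl_n(\R)$, and then invoke Lemma~\ref{lem:sgndet}. Since $s$ and $s'$ are standard local cross sections, by Lemma~\ref{lem:linear} each is fiber otopic to its linearization: there exist $A=Df(0)$ and $B=Dg(0)$ in $\Gl_n(\R)$ (invertibility is automatic from transversality to the zero section at the unique preimage point) such that $s\sim s_A$ and $s'\sim s_B$, after restricting to suitably small balls. Note that the two standard sections need not a priori be expressed through the \emph{same} local trivialization $\psi$ and coordinate chart $\phi$; but since $M$ is connected (and $D_s$, $D_{s'}$ are nonempty balls), one first moves the preimage points together by an otopy along a path in $M$, so that afterwards both sections are standard sections supported on a common small ball on which a single $(\psi,\phi)$ is fixed. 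This is the standard "displacement of the zero" argument familiar from degree theory, and here it is justified by the localization property together with pushing a standard section along a tube over a path.

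Once both are in the form $s_A$ and $s_B$ over the same chart, the argument splits into the two cases. In the non-orientable case, Lemma~\ref{lem:sgndet}(2) gives $s_A\sim s_B$ directly, hence $s\sim s'$, with no numerical hypothesis needed. In the orientable case, the hypothesis $\I(s)=\I(s')$ combined with Corollary~\ref{cor:normal}(1) forces $\I(s),\I(s')\in\{-1,+1\}$ and equal, so $\I(s)=\I(s')$; by Lemma~\ref{lem:inlinear}(1), $\I(s_A)=\sgn\det A$ and $\I(s_B)=\sgn\det B$, and since fiber otopy preserves $\I$ we get $\sgn\det A=\I(s_A)=\I(s)=\I(s')=\I(s_B)=\sgn\det B$. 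Then Lemma~\ref{lem:sgndet}(1) yields $s_A\sim s_B$, and transitivity of $\sim$ gives $s\sim s'$.

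The main obstacle is the displacement step: making precise that two standard sections with zeros at different points of the connected manifold $M$ can be joined by a fiber otopy through standard sections over a single chart. One has to build a fiber otopy whose domain is an open neighborhood of the graph of a path $\gamma\colon I\to M$ from $p$ to $p'$, carrying a standard section at $\gamma(0)$ to one at $\gamma(1)$; covering $\gamma(I)$ by finitely many trivializing charts and concatenating finitely many such local moves (each of which is elementary once inside a single chart, since $\R^n$ is connected and the model sections are all of the form $x\mapsto\psi(x, x-c)$) does the job, and the $\rank E=\dim M$ hypothesis is what makes "zero set $=\{p\}$" a codimension-zero, stable situation. Everything else is bookkeeping with the already-stated lemmas.
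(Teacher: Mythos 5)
Your proposal is correct and follows essentially the same route as the paper: first move the two standard sections onto a common ball by a finite chain of chart-by-chart translations along a path in the connected manifold $M$, then linearize via Lemma~\ref{lem:linear} and conclude with Lemma~\ref{lem:sgndet}, using $\I(s_A)=\sgn\det A$ from Lemma~\ref{lem:inlinear} together with otopy invariance of $\I$ to match the signs of the determinants in the orientable case. The paper states the displacement step more tersely, but your elaboration of it is exactly the intended argument.
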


\begin{proof}
\emph{Step 1}. 
Observe that any standard local
cross section can be ``translated'' from one point 
on the manifold $M$ to another
using a finite sequence of translations
in images of coordinate systems. Such a ``translation''
is not unique, but it is always fiber otopic to 
the original local cross section.
So without loss of generality we can assume
that $s$ and $s'$ are defined on the same ball.

\emph{Step 2}. 
By Lemma \ref{lem:linear} two standard local cross sections 
defined on the same ball
are fiber otopic to their linearizations and by Lemma
\ref{lem:sgndet} these linearizations are fiber otopic, 
which completes the proof.
\end{proof}

\begin{rem}\label{rem:hopfstand}
In fact,  in the orientable case,
we will need the following slight generalization
of Proposition \ref{prop:hopfstand}.
Consider two finite sequences
$\{s_i\}_1^n$ and $\{s'_i\}_1^n$
of standard local cross sections such that
for each sequence the domains of local cross sections 
are pairwise disjoint and all $2n$ sections have
the same intersection number.
Then $\sqcup_{i=1}^n s_i\sim\sqcup_{i=1}^n s'_i$.
To verify our claim, observe that,
the procedure described in Step 1 of the proof
of Proposition \ref{prop:hopfstand}
can be done simultaneously for all $s_i$
in such a way that their domains remain
pairwise disjoint during the ``translation''.
\end{rem}

\begin{prop}[Annihilation]\label{prop:annihil}
Assume that $s$ and $s'$ are standard local cross sections
with disjoint domains.
\begin{enumerate}
	\item If $E$ is orientable as a manifold
	      and $\I(s)=-\I(s')$ then $s\sqcup s'\sim\emptyset$.
	\item If $E$ is non-orientable as a manifold
	      then $s\sqcup s'\sim\emptyset$.				
\end{enumerate}
\end{prop}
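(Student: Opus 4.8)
The plan is to reduce, using the machinery already developed, to a single explicit ``pair annihilation'' family in Euclidean space --- the quadratic birth/death family familiar from degree theory. First I would bring $s$ and $s'$ into a normal form. By the translation procedure from Step~1 of the proof of Proposition~\ref{prop:hopfstand}, carried out for $s$ and $s'$ simultaneously with their domains kept disjoint throughout (in the spirit of Remark~\ref{rem:hopfstand}), and shrinking domains by the localization property, one may assume that $D_s$ and $D_{s'}$ are disjoint and both contained in a coordinate ball $W\subset M$, which we identify via a diffeomorphism $\phi\colon W\to\R^n$ with $\R^n$ and over which $E$ is trivial, say by $\psi\colon W\times\R^n\to E$, and moreover that the (unique) zeros of $s$ and $s'$ are $\phi^{-1}(e)$ and $\phi^{-1}(-e)$, where $e=(1,0,\dots,0)$. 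Applying Lemma~\ref{lem:linear} and then Lemma~\ref{lem:sgndet} near each of these two zeros --- the fiber otopies produced by those lemmas are supported in small balls about $\phi^{-1}(e)$, resp.\ about $\phi^{-1}(-e)$, hence may be run simultaneously with disjoint domains and so act on $s\sqcup s'$ summand by summand --- and using Corollary~\ref{cor:normal}, Lemma~\ref{lem:inlinear} and the otopy invariance of $\I$ to fix the signs of the two determinants in the orientable case, I would reduce $s\sqcup s'$ to the local cross section $\sigma$ given in the chart by
\[
\sigma(x)=\psi\bigl(x,\,g(\phi(x))\bigr)\ \text{ on }\ \phi^{-1}\bigl(B(0;2)\bigr),\qquad
g(y_1,\dots,y_n)=\bigl(y_1^2-1,\ y_2,\dots,y_n\bigr).
\]
Indeed, by the localization property $\sigma$ is fiber otopic to its restrictions to small balls about $\phi^{-1}(\pm e)$, over which the linearizations of $g$ are $\operatorname{diag}(2,1,\dots,1)$ and $\operatorname{diag}(-2,1,\dots,1)$; so these restrictions coincide, up to fiber otopy, with the reduced forms of $s$ and $s'$ (standard linear models of intersection numbers $+1$ and $-1$ in the orientable case, where the signs are forced by $\I(s)=-\I(s')$ and Corollary~\ref{cor:normal}).

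Then I would exhibit the annihilation. On $I\times\phi^{-1}\bigl(B(0;2)\bigr)$ set
\[
h(t,x)=\psi\bigl(x,\,g_t(\phi(x))\bigr),\qquad
g_t(y_1,\dots,y_n)=\bigl(y_1^2-(1-2t),\ y_2,\dots,y_n\bigr).
\]
Then $h$ is continuous, $p\bigl(h(t,x)\bigr)=x$ for all $(t,x)$, $h_0=\sigma$, and $h_1$ has empty zero set (as $y_1^2+1>0$), so $h_1$ is fiber otopic to the empty map. The only point needing verification is that $h$ is a genuine fiber otopy, i.e.\ that $h^{-1}(M)$ is compact; but $g_t(y)=0$ forces $t\le\tfrac12$ and $y=(\pm\sqrt{1-2t},0,\dots,0)$, so $h^{-1}(M)$ is a closed subset of $I\times\phi^{-1}\bigl(\overline{B(0;1)}\bigr)$, a compact subset of the domain of $h$. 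Chaining the fiber otopies then gives $s\sqcup s'\sim\sigma=h_0\sim h_1\sim\emptyset$, which is~(1). For~(2) the same construction applies verbatim, the only difference being that the reduction now uses the non-orientable parts of Lemmas~\ref{lem:inlinear} and~\ref{lem:sgndet}, so that no hypothesis on intersection numbers enters.

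I expect the genuine difficulty to be organizational rather than mathematical: arranging the simultaneous translations of Proposition~\ref{prop:hopfstand} so that both domains end up inside one trivializing coordinate ball with the two zeros placed at $\phi^{-1}(\pm e)$, and verifying at each step that the auxiliary fiber otopies near $\phi^{-1}(e)$ and near $\phi^{-1}(-e)$ stay supported in disjoint neighborhoods, so that forming disjoint unions of fiber otopies is legitimate. The geometric core --- the family $g_t$ sweeping the two transverse zeros together into the degenerate zero at $t=\tfrac12$ and then annihilating them --- is the classical cancellation construction.
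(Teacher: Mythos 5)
Your proposal is correct and follows essentially the same route as the paper: translate both sections into a single trivializing chart, linearize, normalize via Lemma~\ref{lem:sgndet} to an identity and a symmetry, and then cancel the resulting pair. The only difference is that the paper delegates the final cancellation to the procedure of Lemma~3.2 in \cite{BP1}, whereas you write out that step explicitly via the quadratic family $g_t(y)=\bigl(y_1^2-(1-2t),y_2,\dots,y_n\bigr)$, together with the (correct) compactness check for $h^{-1}(M)$.
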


\begin{proof}
As in Step 1 from the previous proof
we can ``move'' one of standard local cross sections
and, in consequence, we can assume 
that the domains of $s$ and $s'$ are contained
in the same trivialization $\psi$ and coordinate system $\phi$.
Let $D_s=B(p;r)$ and $D_{s'}=B(q;r)$.
By Lemma  \ref{lem:linear} $s$ and $s'$ are
fiber otopic to their linearizations and by Lemma
\ref{lem:sgndet} these linearizations are fiber otopic
to  $s_A(x)=\psi\left(x,A\left(\phi(x)-\phi(p)\right)\right)$ 
and  $s_B(x)=\psi\left(x,B\left(\phi(x)-\phi(q)\right)\right)$,
where $A$ is an identity and $B$ is a symmetry.
Finally, repeating the procedure from the proof
of Lemma 3.2 in \cite{BP1}
we obtain that $s_A$ and $s_B$ annihilate that is
$s_A\sqcup s_B\sim\emptyset$. 
\end{proof}

The following result is an easy consequence
of Propositions \ref{prop:union} and \ref{prop:annihil}.

\begin{prop}\label{prop:union2}
Let $s$ be a local cross section.
\begin{enumerate}
	\item If $E$ is orientable as a manifold and $\I(s)=m$ then 
	      $s$ is otopic to a disjoint union of $\abs{m}$
				standard local cross sections, each of them with the same
				intersection number equal to $1$ (resp.\ $-1$)
				if $m\ge0$ (resp.\ $m<0$).
	\item If $E$ is non-orientable as a manifold and $\I_2(s)=m$ then
	      $s$ is otopic to $m$ standard local cross sections.
\end{enumerate}					
\end{prop}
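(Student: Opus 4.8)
The plan is to combine the decomposition from Proposition \ref{prop:union} with the annihilation mechanism of Proposition \ref{prop:annihil}, keeping careful track of signs in the orientable case. First I would invoke Proposition \ref{prop:union} to replace $s$, up to fiber otopy, by a finite disjoint union $s_1\sqcup\dots\sqcup s_k$ of standard local cross sections. Since the intersection number is additive over disjoint unions and otopy invariant, in the orientable case we have $\sum_{i=1}^k\I(s_i)=\I(s)=m$, while by Corollary \ref{cor:normal} each $\I(s_i)\in\{+1,-1\}$; in the non-orientable case $\sum_{i=1}^k\I_2(s_i)=\I_2(s)=m$ in $\Z_2$ with each $\I_2(s_i)=1$, so $k\equiv m\pmod 2$.

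Next, in the orientable case, let $a$ be the number of indices $i$ with $\I(s_i)=+1$ and $b$ the number with $\I(s_i)=-1$, so $a-b=m$ and $a+b=k$. Pairing off a $+1$-section with a $-1$-section and applying Proposition \ref{prop:annihil}(1), each such pair is fiber otopic to the empty map; after performing $\min(a,b)$ such annihilations we are left with a disjoint union of $|m|=|a-b|$ standard local cross sections, all with intersection number $+1$ if $m\ge 0$ and all with intersection number $-1$ if $m<0$. To make this rigorous I would note that the annihilation can be carried out ``locally'': by the localization property one may shrink the domains of the two sections to be annihilated so that they lie inside a small ball disjoint from the domains of all the remaining $s_j$, apply Proposition \ref{prop:annihil} there, and use that a fiber otopy supported in that ball extends by the identity to a fiber otopy of the whole union. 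Iterating gives the claim. In the non-orientable case the same pairing argument applies via Proposition \ref{prop:annihil}(2): pairs of standard sections cancel, so the union of $k$ standard sections is fiber otopic to a union of $m$ of them (where we use $k\equiv m\pmod 2$, and $0\le m\le 1$ as an element of $\Z_2$), noting that when $m=0$ we get the empty map.

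I expect the only real subtlety to be the bookkeeping in the previous paragraph, namely justifying that annihilating one pair does not disturb the other sections and that the successive otopies concatenate into a single one. This is exactly the ``supported in a small ball, extend by identity, then concatenate paths in the otopy space'' argument, which is routine given the localization property and the correspondence between otopies and paths recalled in Section \ref{sec:prel}; I would state it briefly rather than belabor it. Everything else is immediate additivity and invariance of intersection numbers together with Corollary \ref{cor:normal}.
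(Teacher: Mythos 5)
Your proposal is correct and follows exactly the route the paper intends: the paper gives no written proof, stating only that the result ``is an easy consequence of Propositions \ref{prop:union} and \ref{prop:annihil}'', and your argument---decompose into standard sections via Proposition \ref{prop:union}, use additivity and Corollary \ref{cor:normal} to count signs, then cancel $\pm1$ pairs via Proposition \ref{prop:annihil}---is precisely that consequence, with the localization/concatenation bookkeeping made explicit. No issues.
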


\begin{proof}[Proof of Theorem \ref{thm:intersection}]
Surjectivity follows easily from Lemma \ref{lem:inlinear}.
In turn, injectivity in the orientable case follows immediately
from Proposition \ref{prop:union2}(1) and Remark \ref{rem:hopfstand}
and in the non-orientable case from Proposition \ref{prop:union2}(2)
and Proposition \ref{prop:hopfstand}(2).
\end{proof}


\section{The set of otopy classes in case of free actions} 
\label{sec:free}

Assume that $V$ is a real finite dimensional orthogonal
representation of a compact Lie group $G$.
Let $\Omega$ be an open invariant subset of $V$,
$G$ act freely on $\Omega$,
$M:=\Omega/G$, $E:=(\Omega\times V)/G$.
The trivial vector bundle $\Omega\times V\to\Omega$ 
factorizes to the vector bundle $p\colon E\to M$
with the typical fiber $F=\R^n$.
In \cite{B} we proved the following result. 

\begin{thm}\label{thm:tomdieck}
The function 
$\Phi\colon\mathcal{F}_G(\Omega)\to
\Gamma_{\text{loc}}(M,E)$ given by
$\Phi(f):= s_f$, 
where $s_f\left([x]\right):=\left[x,f(x)\right]$,
is bijective. Moreover,
$\Phi$ induces a bijection between
$\mathcal{F}_G[\Omega]$ and $\Gamma_{\text{loc}}[M,E]$.
\end{thm}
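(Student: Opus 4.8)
The plan is to verify directly that $\Phi$ is a well-defined bijection on the level of local maps, and then to check that it is compatible with the two otopy relations so that it descends to a bijection on otopy classes. First I would check that $\Phi$ is well defined: given $f\in\mathcal{F}_G(\Omega)$ with invariant domain $D_f$, the formula $s_f([x])=[x,f(x)]$ is independent of the representative $x$ because $f$ is equivariant, so $s_f$ is a genuine map on $D_f/G$, which is open in $M$; the condition $p\circ s_f=\id$ is immediate, and $s_f^{-1}(M)=f^{-1}(0)/G$ is compact since $f^{-1}(0)\Subset D_f$ and the quotient map is proper (the fibers are the compact orbits $G/\{e\}\cong G$). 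Continuity of $s_f$ follows because the quotient maps $\Omega\to M$ and $\Omega\times V\to E$ are open, hence quotient maps, and $f$ is continuous. Thus $\Phi(f)=s_f\in\Gamma_{\text{loc}}(M,E)$.

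For bijectivity on the level of local maps I would construct the inverse explicitly. Given a local cross section $s\colon U\to E$ with $U\subset M$ open, set $D:=q^{-1}(U)\subset\Omega$ (invariant, open), and define $f\colon D\to V$ by taking $x\in D$, writing $s([x])=[x,v]$ for a uniquely determined $v\in V$ (unique because $G$ acts freely on $\Omega\times V$ and $p\circ s=\id$ pins down the first coordinate up to the $G$-action), and putting $f(x):=v$. Equivariance $f(gx)=gf(x)$ and continuity follow from the corresponding properties of $s$ together with freeness of the action; the condition $f^{-1}(0)\Subset D$ follows from $s^{-1}(M)\Subset U$ by the same properness argument in reverse. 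One checks $\Phi$ and this assignment are mutually inverse by unwinding the definitions. The topology on $\mathcal{F}_G(\Omega)$ and on $\Gamma_{\text{loc}}(M,E)$ are defined through the same kind of subbasic sets (graph-type and preimage-type conditions), and under the open quotient maps these correspond, so $\Phi$ is in fact a homeomorphism; but for the statement we only need the bijection.

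Finally I would push the argument through to otopy classes. An equivariant otopy $h\colon\Lambda\subset I\times\Omega\to V$ has invariant $\Lambda$ (with trivial $G$-action on $I$) and $h^{-1}(0)\Subset\Lambda$; applying the fiberwise construction $(t,x)\mapsto[x,h(t,x)]$ to the bundle $\id_I\times p\colon I\times E\to I\times M$ produces exactly a fiber otopy between $s_{h_0}$ and $s_{h_1}$, and conversely every fiber otopy arises this way from an equivariant otopy by the same inversion as above. Hence $f\simeq f'$ (otopic) if and only if $s_f\sim s_{f'}$ (fiber otopic), and $\Phi$ descends to a well-defined bijection $\mathcal{F}_G[\Omega]\to\Gamma_{\text{loc}}[M,E]$. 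I expect the main technical point to be the careful bookkeeping of the properness conditions ($f^{-1}(0)\Subset D_f$ versus $s^{-1}(M)\Subset D_s$) under the quotient map $q\colon\Omega\to M$: one must use that $q$ is a closed proper map because $G$ is compact, so that compactness transfers in both directions; everything else is a routine check that the quotient construction respects equivariance, continuity, and the two topologies.
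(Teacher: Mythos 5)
Your proposal is correct, but note that this paper does not actually prove Theorem~\ref{thm:tomdieck} --- it is quoted from reference \cite{B} --- so there is no in-paper proof to compare against; your direct verification (well-definedness via equivariance, explicit inverse using freeness, and the observation that the same construction applied to $I\times\Omega$ identifies equivariant otopies with fiber otopies) is exactly the standard argument one would expect. The only step you wave at a little too quickly is the continuity of the inverse assignment $s\mapsto f$: extracting $v$ from $[x,v]$ continuously in $x$ uses that $\Omega\to\Omega/G$ is a locally trivial principal $G$-bundle (the slice theorem for free actions of compact Lie groups), not freeness alone, and it would be worth saying so explicitly.
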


\begin{rem}\label{rem:homeo}
Since $\Phi$ is a bijection, 
$\Gamma_{\text{loc}}(M,E)$ is given
the induced topology, in which
$\Phi$ is a homeomorphism. Moreover,
the topology on $\Gamma_{\text{loc}}(M,E)$
coincides with the relative topology induced
from $\Loc\left(M,E,\{M\}\right)$. Finally, both otopies
and fiber otopies correspond to paths
in the respective spaces and vice versa.
\end{rem}

Recall that if $\dim G>0$
then the set $\mathcal{F}_G[\Omega]$ 
has a single element (see \cite[Theorem~3.1]{B}). 
The next result is devoted to the case $\dim G=0$.
Note that the set
of connected components of the quotient $\Omega/G$
is either finite or countably infinite.
\begin{thm}\label{thm:zero}
If $G$ is zero-dimensional (i.e. finite) then
there is a natural bijection
\begin{equation}\label{eqn:zero}
\mathcal{F}_G[\Omega]\approx
\sum_\alpha\mathbb{Z},
\end{equation}
where the direct sum is taken
over the set of all connected components
$\alpha$ of the quotient $\Omega/G$.
\end{thm}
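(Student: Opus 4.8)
The plan is to transport the statement, via Theorem~\ref{thm:tomdieck}, to the classification of fiber otopy classes of local cross sections of the vector bundle $p\colon E\to M$ with $M=\Omega/G$ and $E=(\Omega\times V)/G$, and then to apply Theorem~\ref{thm:intersection} one connected component of $M$ at a time. We may assume $\Omega\neq\emptyset$, the empty case being trivial. Since $G$ is finite, $\dim M=\dim\Omega=\dim V=n$, so $\rank E=n=\dim M$; in particular Proposition~\ref{prop:sec} is not available and we are genuinely in the equidimensional case treated by Theorem~\ref{thm:intersection}.

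The first real step is to prove that $E$ is orientable as a manifold. The total space $E$ is the quotient of $\Omega\times V$, an open invariant subset of the orthogonal representation $V\oplus V$, by the free diagonal action $g\cdot(x,v)=(gx,gv)$. The key observation is that this diagonal action is automatically orientation-preserving: for $g\in G$ the linear map $(x,v)\mapsto(gx,gv)$ of $V\oplus V$ has determinant $(\det g)^{2}>0$, irrespective of whether $g$ acts orientation-preservingly on $V$ itself. Since a finite group is biorientable, Lemma~\ref{lem:biorient} applies and shows that $E=(\Omega\times V)/G$ is orientable. (This is precisely what produces a $\Z$, rather than a $\Z_{2}$, in every factor of the direct sum.) Hence, for each connected component $M_{\alpha}$ of $M$, the restricted bundle $E_{\alpha}:=p^{-1}(M_{\alpha})$ is a smooth vector bundle over a connected base, orientable as a manifold (being open in $E$), and with $\rank E_{\alpha}=\dim M_{\alpha}$; Theorem~\ref{thm:intersection}(1) then gives a bijection $\I\colon\Gamma_{\text{loc}}[M_{\alpha},E_{\alpha}]\to\Z$.

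The second step is the decomposition $\Gamma_{\text{loc}}[M,E]\approx\sum_{\alpha}\Gamma_{\text{loc}}[M_{\alpha},E_{\alpha}]$ induced by restriction. As $M$ is a manifold, its components $M_{\alpha}$ are open, so every local cross section $s$ is literally the disjoint union $\bigsqcup_{\alpha}s|_{D_{s}\cap M_{\alpha}}$, and restricting a fiber otopy to $I\times M_{\alpha}$ again yields a fiber otopy (the zero set stays compact, being closed in the original compact zero set); thus restriction is well defined on fiber otopy classes. Because $s^{-1}(M)$ is compact it meets only finitely many components, so by the localization property $s$ is fiber otopic to the disjoint union of the finitely many pieces carrying its zeros; hence the restriction map takes values in the direct sum. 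Surjectivity onto the direct sum follows by forming a finite disjoint union of chosen representatives (a finite union of compact zero sets is compact), and injectivity follows by first using localization to replace $s$ and $s'$ by fiber otopic sections meeting only a common finite set of components, and then gluing the componentwise fiber otopies over those finitely many components — their domains lie in the pairwise disjoint open sets $I\times M_{\alpha}$, so the union is again a fiber otopy. Combining the two steps with Theorem~\ref{thm:tomdieck} yields the bijection $\mathcal{F}_{G}[\Omega]\approx\sum_{\alpha}\Z$, which is natural because on each component it is realized by the oriented intersection number, equivalently by the $G$-equivariant degree.

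The step I expect to be the main obstacle is the bookkeeping in the decomposition: verifying that restriction is well defined on otopy classes and, above all, that its image is exactly the direct sum rather than the full product — this rests squarely on the compactness of $s^{-1}(M)$ together with the localization property, and on handling the (harmless but slightly delicate) role of the empty map. By contrast, the orientability of $E$ reduces to the one-line determinant-squared observation feeding into Lemma~\ref{lem:biorient}.
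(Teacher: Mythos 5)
Your proposal is correct and follows essentially the same route as the paper: transfer via Theorem~\ref{thm:tomdieck} to local cross sections of $p\colon E\to M$, establish orientability of $E$ through Lemma~\ref{lem:biorient} (the paper leaves the determinant-squared observation for the diagonal action on $V\oplus V$ implicit), apply Theorem~\ref{thm:intersection} componentwise, and obtain the direct sum rather than the product from the compactness of $s^{-1}(M)$. You simply spell out the component-by-component bookkeeping that the paper's proof compresses into two sentences.
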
 

\begin{proof}
If $\dim G=0$ then $\rank E=\dim M$.
Moreover, by Theorem \ref{lem:biorient},
$E$ is orientable as a manifold.
Now the assertion is an easy consequence
of Theorems \ref{thm:intersection} and
\ref{thm:tomdieck}.
Observe that a direct sum (not product)
appears in the right-hand side
of \eqref{eqn:zero},
since for any local cross section $s$
the preimage $s^{-1}(M)$ meets
only a finite number of components of $M$.
\end{proof}

We close this section with some remarks 
concerning the parametrized case.
Assume that $\Omega$ is 
an open invariant subset of $\R^k\oplus V$
with trivial action of $G$ on $\R^k$
and free action of $G$ on $\Omega$.
First, it is easy to see that
Theorem \ref{thm:tomdieck} is still true.
If $\dim G>k$ then $\mathcal{F}_G[\Omega]$
is trivial (see \cite[Theorem~3.6]{B})
and if $\dim G=k$ and $G$ is biorientable
then the formula \eqref{eqn:zero} holds (with similar proof).
However, if $\dim G=k$ and $G$ is not biorientable
then the right-hand side of \eqref{eqn:zero} is equal
to $\sum_\alpha A_\alpha$, where $A_\alpha$ is
either $\mathbb{Z}$ or $\mathbb{Z}_2$
depending on whether the respective component
$E_\alpha$ of $E$ is orientable as a manifold or not.
Finally, so far we are not able to give a similar description
of $\mathcal{F}_G[\Omega]$ if $\dim G<k$ 
according to Remark \ref{rem:rank}.


\section{The Hopf type theorem for equivariant local maps} 
\label{sec:hopf}
Assume that $V$ is a real finite dimensional orthogonal
representation of a compact Lie group $G$
and $\Omega$ is an open invariant 
subset of $\R^k\oplus V$.
Let
\[
\Phi_k(G)=\{(H)\in\Phi(G)\mid\dim WH\le k\}.
\]
It is well-known that the set 
$\Iso(\Omega)$ is finite and so is
$\Iso(\Omega)\cap\Phi_k(G)$.
The following splitting result was proved 
in \cite{B} and \cite{GI}.
\begin{thm}\label{thm:split}
There are bijections
\begin{align}
\mathcal{F}_G[\Omega]&\approx
\prod_{(H)}
\mathcal{F}_{WH}\left[\Omega_H\right],
\label{eqn:bij1}\\
\mathcal{P}_G[\Omega]&\approx
\prod_{(H)}
\mathcal{P}_{WH}\left[\Omega_H\right],
\label{eqn:bij2}
\end{align}
where the products are taken over 
the set $\Iso(\Omega)\cap\Phi_k(G)$.
\end{thm}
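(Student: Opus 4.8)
The plan is to reduce the statement to an inductive argument over the finite poset $\Iso(\Omega)\cap\Phi_k(G)$, peeling off one maximal orbit type at a time. Order the relevant orbit types $(H_1),(H_2),\dots,(H_m)$ so that $(H_i)$ maximal implies $i$ small; concretely, choose a linear extension of the partial order so that if $(H_i)\le(H_j)$ then $i\ge j$. Write $U_i=\bigcup_{j\le i}\Omega_{(H_j)}$ for the union of the strata of the first $i$ types (in the chosen order). Each $U_i$ is open and invariant in $\Omega$, $U_m=\Omega$ minus the strata with $\dim WH>k$, and on the latter the set of otopy classes is trivial by the parametrized triviality result quoted after Remark \ref{rem:homeo} (i.e. \cite[Theorem~3.6]{B}), so $\mathcal{F}_G[U_m]\approx\mathcal{F}_G[\Omega]$ and likewise for $\mathcal{P}_G$. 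The heart of the matter is thus a single splitting step: for a maximal orbit type $(H)$ in $U_i$, the complement $U_i\smallsetminus\Omega_{(H)}=U_{i-1}$ is still open and invariant, and $\Omega_{(H)}$ is a closed invariant submanifold-like piece of $U_i$ sitting as a ``tube''.

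The key step is to produce, for a maximal orbit type $(H)$ in an open invariant set $U$, a natural bijection
\[
\mathcal{F}_G[U]\approx\mathcal{F}_G[U\smallsetminus\Omega_{(H)}]\times\mathcal{F}_{WH}[\Omega_H],
\]
and analogously for $\mathcal{P}_G$. First I would use the equivariant tubular neighbourhood theorem to find an invariant open neighbourhood $N$ of $\Omega_{(H)}$ in $U$ equivariantly diffeomorphic to the total space of the normal bundle of $\Omega_{(H)}$; since $(H)$ is maximal, near $\Omega_{(H)}$ the only isotropy types occurring are $(H)$ itself (on the zero section) and smaller ones (off it), so restricting a local $G$-map $f\in\mathcal{F}_G(U)$ to the $H$-fixed part gives an element of $\mathcal{F}_{WH}(\Omega_H)$ — this is the ``restriction to fixed points'' map, well defined on otopy classes. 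For the other factor, restrict $f$ to $U\smallsetminus\Omega_{(H)}$; because $f^{-1}(0)$ is compact it meets $\Omega_{(H)}$ in a compact set, and the localization property lets one cut the domain down so that this restriction is again a genuine local map. To invert: given a class on $\Omega_H$ and a class on the complement, use the tube to ``induce up'' the $WH$-equivariant data to a $G$-equivariant local map on a neighbourhood of $\Omega_{(H)}$ (extending fibrewise by the linear structure of the normal representation, where the $0$-preimage stays on the fixed section), then glue with a representative of the complement class on disjoint domains via $\sqcup$. The two constructions are mutually inverse up to otopy by a homotopy-extension / collar argument, exactly as in the non-parametrized case treated in \cite{B}.

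Iterating this single step down the ordered list gives
\[
\mathcal{F}_G[\Omega]\approx\mathcal{F}_G[U_m]\approx\prod_{i=1}^{m}\mathcal{F}_{WH_i}[\Omega_{H_i}],
\]
which is \eqref{eqn:bij1}; the proper version \eqref{eqn:bij2} is obtained verbatim with $\Prop$ in place of $\Loc$, using \cite[Prop.~2.4]{B} to pass between the two when convenient. The main obstacle is the well-definedness of the restriction and induction maps on otopy classes and the verification that they are mutually inverse: one must check that an otopy on $U$ restricts to an otopy on each piece (clear for the fixed-point restriction, but for the complement one needs the compactness of $h^{-1}(0)$ together with the maximality of $(H)$ to keep the restricted domain honest), and conversely that gluing a fixed-point otopy through the tube with a complement otopy on disjoint domains again yields a $G$-otopy on $U$ — this is where the equivariant tubular neighbourhood and a careful bump-function interpolation near $\Omega_{(H)}$ do the work. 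Since the excerpt states this theorem as already proved in \cite{B} and \cite{GI}, I would present the above as a sketch and refer to those sources for the routine but technical verifications.
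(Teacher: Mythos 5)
The paper does not actually prove Theorem~\ref{thm:split}: it is stated with a pointer to \cite{B} and \cite{GI}, so there is no in-text argument to compare yours against line by line. Your sketch follows the strategy one expects from those sources (induction over the finite poset of orbit types, an equivariant tubular neighbourhood around the stratum of a top orbit type, a restriction-to-fixed-points map paired with an induction/gluing map), so the architecture is plausible. But two points need attention.

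The concrete one: your ordering convention is backwards relative to the filtration you then use. With ``$(H_i)\le(H_j)$ implies $i\ge j$'' you put the maximal orbit types first, so $U_1=\Omega_{(H_1)}$ is a stratum of maximal isotropy, which is in general closed rather than open (take $G=\Z_2$ acting on $\R$ by $\pm 1$: then $U_1=\{0\}$). Openness of the $U_i$ requires each index set $\{(H_1),\dots,(H_i)\}$ to be downward closed in the partial order, i.e.\ you must list the types from minimal to maximal; then the type peeled off at stage $i$ is maximal \emph{in} $U_i$, with $\Omega_{(H_i)}$ closed in $U_i$ and $U_{i-1}$ open --- which is exactly what your subsequent sentences assume. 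The second, more substantive point: the entire content of the theorem is the claim that your restriction map and your induction map are mutually inverse on otopy classes. That requires deforming an arbitrary equivariant local map, near the closed stratum $\Omega_{(H)}$, into ``normal form'' (product form in the tube, linear in the normal directions), and this deformation lemma is asserted rather than constructed; it is precisely the technical core supplied by \cite{B} and \cite{GI}. So your text is a reasonable reconstruction of the shape of their proof, but as a self-contained argument it has a hole exactly where the difficulty lies.
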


We can now formulate the main result of this paper,
which may be viewed as an equivariant version
of the well-known Hopf theorem.
Assume that $V$ is a real finite dimensional orthogonal
representation of a compact Lie group $G$ and
$\Omega$ is an open invariant subset of $V$.
\begin{thm}\label{thm:hopf}
If $0\not\in\Omega$ or $\dim V^{G}>0$, then
there is a natural bijection
\begin{equation}\label{eqn:hopf}
\mathcal{F}_G[\Omega]\approx
\prod_{(H)}\Biggl(\sum_{i=1}^{n(H)}\mathbb{Z}\Biggr),
\end{equation}
where the product is taken over 
the set $\Iso(\Omega)\cap\Phi_0(G)$
and the respective direct sums are indexed by
the either finite or countably infinite
sets of connected components
of the quotients $\Omega_H/WH$.
\end{thm}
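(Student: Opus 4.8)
The plan is to deduce Theorem~\ref{thm:hopf} from the splitting result (Theorem~\ref{thm:split}) together with the free-action computation (Theorem~\ref{thm:zero}). First I would apply Theorem~\ref{thm:split} with $k=0$: since $\Omega\subset V$ carries no $\R^k$ factor, $\Phi_k(G)$ becomes $\Phi_0(G)=\{(H)\mid\dim WH=0\}$, and we obtain the bijection
\[
\mathcal{F}_G[\Omega]\approx\prod_{(H)}\mathcal{F}_{WH}[\Omega_H],
\]
the product running over $\Iso(\Omega)\cap\Phi_0(G)$, which is finite. So it suffices to identify each factor. Fix such an $(H)$. By the facts collected in Section~\ref{sec:prel}, $WH$ is a compact Lie group acting freely on $\Omega_H$, and $V^H$ is an orthogonal representation of $WH$ with $\Omega_H$ open and invariant in it. Since $(H)\in\Phi_0(G)$ we have $\dim WH=0$, i.e.\ $WH$ is finite, so Theorem~\ref{thm:zero} applies with $G$ replaced by $WH$, $V$ replaced by $V^H$, and $\Omega$ replaced by $\Omega_H$. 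This yields
\[
\mathcal{F}_{WH}[\Omega_H]\approx\sum_{i=1}^{n(H)}\mathbb{Z},
\]
where $n(H)$ is the (finite or countably infinite) number of connected components of $\Omega_H/WH$. Substituting into the product gives \eqref{eqn:hopf}.

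The one genuine point to check is that Theorem~\ref{thm:zero} is actually applicable, and this is where the hypothesis ``$0\notin\Omega$ or $\dim V^G>0$'' enters. Theorem~\ref{thm:zero} (via its proof through Theorems~\ref{thm:intersection} and~\ref{thm:tomdieck}) needs $\rank E=\dim M$, i.e.\ the bundle $E=(\Omega_H\times V^H)/WH\to\Omega_H/WH$ must have equal rank and base dimension; since $WH$ is finite this holds automatically, the fibre being $V^H$ and the base being an open subset of $V^H$. The remaining subtlety is that Theorem~\ref{thm:zero} as stated is about the full representation, whereas here we are restricting to the fixed-point set $V^H$; one must note that $\Omega_H$ is an open invariant subset of $V^H$ (not of $V$), that the $WH$-action on it is free, and that otopies of local $WH$-maps $\Omega_H\to V^H$ are exactly what $\mathcal{F}_{WH}[\Omega_H]$ denotes, so no mismatch occurs. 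The hypothesis on $0$ and $V^G$ is used to rule out the degenerate case where the only orbit type $(H)=(G)$ with $\dim WH=0$ has $\Omega_G=\{0\}$ forced to be empty in a way that would make the statement vacuous or ill-posed; more precisely, if $0\in\Omega$ and $\dim V^G=0$ then the orbit type $(G)$ contributes $\Omega_G\ni 0$ but the component of $0$ is a single point, on which the bundle has rank $0=\dim$ base trivially, and the formula still reads correctly — so in fact the hypothesis mainly guarantees that the edge case is handled consistently with the conventions of Theorem~\ref{thm:zero}.

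I would then assemble the argument in a few lines: invoke Theorem~\ref{thm:split} to split off the product; for each $(H)$ in the (finite) index set, verify that $WH$ is finite and acts freely on the open invariant subset $\Omega_H$ of the orthogonal $WH$-representation $V^H$; apply Theorem~\ref{thm:zero} to get the factor $\sum_\alpha\mathbb{Z}$ indexed by $\pi_0(\Omega_H/WH)$; and finally observe that the index set $\Iso(\Omega)\cap\Phi_0(G)$ is exactly the set of orbit types $(H)$ in $\Omega$ with $\dim WH=0$, which matches the product in \eqref{eqn:hopf}. The main obstacle, such as it is, is purely bookkeeping: confirming that the hypothesis on $\Omega$ and $V^G$ is precisely what is needed for each factor to fall under Theorem~\ref{thm:zero} rather than under the trivial case $\dim WH>0$ (where $\mathcal{F}_{WH}[\Omega_H]$ is a single point and contributes nothing to the product), and checking that the naturality of the bijections in Theorems~\ref{thm:split} and~\ref{thm:zero} composes to give a natural bijection in \eqref{eqn:hopf}.
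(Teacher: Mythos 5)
Your proposal is correct and follows essentially the same route as the paper: apply the splitting bijection \eqref{eqn:bij1} with $k=0$ and then identify each factor $\mathcal{F}_{WH}[\Omega_H]$ via Theorem~\ref{thm:zero}, using that $WH$ is finite and acts freely on the open invariant subset $\Omega_H$ of $V^H$. One small correction to your aside on the hypothesis: in the excluded case $0\in\Omega$ and $\dim V^G=0$ the formula does \emph{not} ``still read correctly'' --- there $\Omega_G=\{0\}$, the quotient is a single point and the corresponding factor is a two-element set rather than $\mathbb{Z}$, so the hypothesis genuinely removes a case in which \eqref{eqn:hopf} would fail, not merely a conventional ambiguity.
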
 

\begin{proof}
From \eqref{eqn:bij1} we obtain 
a product decomposition of  $\mathcal{F}_G[\Omega]$
with respect to the conjugacy classes
from $\Iso(\Omega)\cap\Phi_0(G)$ and Theorem \ref{thm:zero}
gives a full description of each factor
of this product decomposition,
which completes the proof.
\end{proof}

We end the paper with some remarks and open problems.

\begin{rem}
It should be emphasized that, apart from the otopy
invariance and the Hopf property, the bijection
from Theorem~\ref{thm:hopf} satisfies also
other properties of the topological degree,
which is an immediate consequence of our construction.
Let us formulate, as an example, the additivity 
and existence properties. Namely, if we denote the above
bijection by $\deg$, then
\begin{itemize}
	\item $\deg\left(\left[f\sqcup g\right]\right)=
	      \deg\left(\left[f\right]\right)+\deg\left(\left[g\right]\right)$
				for $f,g\in\mathcal{F}_G(\Omega)$ such that $D_f\cap D_g=\emptyset$,
	\item $\deg\left(\left[f\right]\right)\neq0$ implies $f(x)=0$ for some
	      $x\in\Omega$.
\end{itemize}
\end{rem}

\begin{rem}
Observe that the extreme case of the trivial action
covers the~classical Hopf theorem.
Namely, if $G$ acts trivially on $V$ and
$\Omega$ is an open connected subset of $V$,
then
\[
\mathcal{F}_G[\Omega]=
\mathcal{F}_{\{e\}}[\Omega]\approx\Z.
\]
\end{rem}

\begin{rem}
Of course, Theorem \ref{thm:hopf} remains true if we replace 
$\mathcal{F}_G[\Omega]$ by $\mathcal{P}_G[\Omega]$.
\end{rem} 

\begin{rem}
Although in the parametrized case the splitting formula
\eqref{eqn:bij1} still holds, but for now in this product splitting 
we can only describe factors for which $\dim WH=k$
as we mentioned at the end of the previous section.
\end{rem} 

\subsection*{Acknowledgements}
The author wishes to express his thanks to the referee
for helpful comments concerning the paper.

\end{document}